\newtheorem{lem}{Lemma}[section]
\newtheorem{thm}[lem]{Theorem}
\newtheorem{prop}[lem]{Proposition}
\newtheorem{fact}[lem]{Fact}
\def\th@plain{%
  \thm@notefont{}
  \itshape 
}
\def\th@definition{%
  \thm@notefont{}
  \normalfont 
}
\def\th@procedure{
	\thm@notefont{}
	\ttfamily
}
\theoremstyle{definition} 
\newtheorem{rk}[lem]{Remark}
\theoremstyle{procedure} 
\newcommand{\Za}{\mathbb{Z}}
\newcommand{\Na}{\mathbb{N}}
\newcommand{\re}{\mathbb{R}}
\newcommand{\tr}{\triangle}
\newcommand{\supp}{{\rm supp}}
\author{Maria Michalska}
\author{Justyna Walewska}
\title[Milnor numbers of deformations]{Milnor numbers of deformations of semi-quasi-homogeneous plane curve singularities II}
\address{Wydzia\l{} Matematyki i Informatyki, Uniwersytet \L{}\'o{}dzki, Banacha 22, 90-238 \L{}\'o{}d\'z{}, Poland}
\email{Maria.Michalska@math.uni.lodz.pl}
\email{walewska@math.uni.lodz.pl}
\keywords{Milnor numbers, deformations of singularities, nondegenerate singularities, adjacency, Newton polygon}
\subjclass[2010]{14B07, 14N10, 32S30}
\begin{document}
\maketitle

\begin{abstract}
We show the possible Milnor numbers of deformations of semi-quasi-homogeneous isolated plane curve singularities. In Theorem~\ref{thm_Main} we list integers can be attained as Milnor numbers of a given semi-quasi-homogeneous singularity.
\end{abstract}

\section{Introduction}

Our main goal is to identify all possible Milnor numbers attained by deformations of plane curve singularities. First note that since~\cite{GuseinZade} it is known that not every integer less than the Milnor number of an isolated singularity $f$ has to be a Milnor number of a deformation of $f$. The sequence (or possible sequences of specializations of multiparameter deformations) of Milnor numbers attained by deformations gives interesting topological data for plane curves via adjacency of $\mu$-constant strata. Our interest in the subject stems from papers \cite{Pl14}, \cite{Bo} or~\cite{GreuelShustinL}, as well as classic~\cite{ArnoldProblems}.

The approach presented here is a continuation and fuller use of methods of~\cite{MMJW} hence this paper expands the results of~\cite{MMJW} and omits the assumption of irreducibility. Throughout this paper we will consider semi-quasi-homogeneous singularities (SQH for short) i.e. isolated singularities such that their initial term (in weighted Taylor expansion) is a weighted homogeneous isolated singularity. In particular, every semi-quasi-homogeneous singularity can be written in the form
\begin{equation}
\label{eqPostacfQSH}
f=\sum_{q\alpha + p\beta\  \geq\  pq}c_{\alpha\beta}\ x^{\alpha}y^{\beta}
\end{equation} 
for some positive integers $p,q$ such that the initial term ${\rm in}f=\sum_{q\alpha + p\beta\  =\  pq}c_{\alpha\beta}\ x^{\alpha}y^{\beta}$ is an isolated singularity. In such a case we will say that $f$ as weighs $(1/p,1/q)$ i.e. the weighs of the initial quasi-homogeneous term of $f$. 

Without loss of generality throughout this paper we assume that 
\begin{center}
$2\leq p\leq q$ and denote $q=kp+r$, $p> r\geq 0$.
\end{center}


We investigate the sequence of all Milnor numbers that can be attained via one-parameter deformations of $f$. We show that
\begin{thm}\label{thm_Main}
For semi-quasi-homogeneous singularity $f$ with weighs $(1/p, 1/q)$ 
\begin{enumerate}
\item if $p$ divides $q$, all Milnor numbers less than $\mu(f)$ are attained except for at most:

 the number $\mu(f)-(2p-1)$ for $p$ even 
 
 AND  
 
 numbers between $\mu(f)$ and $\mu (f)-(p-1)$

\item if $p$ does not divide $q$, all Milnor numbers less than $\mu(f)$ are attained except for at most:

 the numbers between $\mu(f)$ and $\mu(f)-m$ 
 
 AND 
 
 the numbers between $\mu(p,kp)$ and $\mu(p,kp)-(p-1)$ 
 
 AND 
 
  the number $\mu(p,kp)-(2p-1)$ if p even
  
AND

the number $\mu(p,q)-p$ if $p$ even and $q\equiv p-1 ({\rm mod\ }p)$
\end{enumerate}
Moreover, all these numbers are attained by linear deformations of f.
\end{thm}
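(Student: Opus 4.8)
The plan is to reduce the statement to a computation with Newton diagrams and then to a purely combinatorial enumeration of lattice points. Recall that a semi-quasi-homogeneous $f$ with weights $(1/p,1/q)$ has Newton diagram equal to the segment joining $(0,q)$ and $(p,0)$, so by Kouchnirenko's formula $\mu(f)=(p-1)(q-1)$; recall also that the Milnor number of a nondegenerate plane curve singularity depends only on its Newton diagram. The deformations we use are the \emph{linear} one-monomial deformations $f_t=f+t\,x^{a}y^{b}$ where $(a,b)$ is a lattice point strictly below the line $q\alpha+p\beta=pq$. For generic $t$ such an $f_t$ is again nondegenerate, with Newton diagram the lower boundary of the convex hull of $(a,b)$ together with the Newton polyhedron of $f$, and its Milnor number $\mu(f_t)=:\mu_{a,b}$ is obtained from $\mu(f)$ by Kouchnirenko's formula: one subtracts twice the area of the region cut off between the old segment and the new polygonal path through $(a,b)$, plus the one-dimensional correction terms that appear precisely when $(a,b)$ lies on a coordinate axis. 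These corrections are the source of the sporadic exceptions in the statement.

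The next step is to enumerate $\{\mu_{a,b}\}$ as $(a,b)$ ranges over the lattice points below the Newton line. Two distinguished positions organize the picture. First, the lattice points lying just below the segment $(0,q)$--$(p,0)$ produce the values near the top of the range: the largest achievable value below $\mu(f)$ turns out to be $\mu(f)-m-1$ when $p\nmid q$ (and $\mu(f)-p$ when $p\mid q$), so the block between $\mu(f)$ and $\mu(f)-m$ (resp.\ $\mu(f)-(p-1)$) is skipped. Second, the point at which $f_t$ degenerates to a singularity with Newton diagram that of $x^{p}+y^{kp}$ gives the internal value $\mu(p,kp)=(p-1)(kp-1)$, around which the lattice again forces the skipped block between $\mu(p,kp)$ and $\mu(p,kp)-(p-1)$, together with the extra value $\mu(p,kp)-(2p-1)$ when $p$ is even and, in the boundary residue case $q\equiv p-1\pmod p$, the extra value $\mu(p,q)-p$. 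Between these two ceilings, and below $\mu(p,kp)$ down to $0$, I would show that consecutive achievable values differ by exactly $1$: for each target $\nu$ one produces an explicit $(a,b)$ solving $\mu_{a,b}=\nu$, the small values $0,1,2,\dots$ being covered by monomials with small coordinates (including a genuine smoothing $f+tx$ for $\nu=0$).

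The whole argument is organized by the case split $p\mid q$ versus $p\nmid q$, and within the latter by the parity of $p$ and by whether $r=q-kp$ equals $p-1$; in the irreducible subcases the required building blocks are available from \cite{MMJW}, while the reducible cases are handled by combining one-monomial deformations placed at different vertices of the Newton diagram of $f$. Upper semicontinuity of the Milnor number along each family guarantees $\mu(f_t)\le\mu(f)$, so no value outside the predicted range is produced.

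The main obstacle is the precise bookkeeping near the two ceilings $\mu(f)$ and $\mu(p,kp)$: one must show not only that the listed numbers are skipped but that \emph{every} other integer in the two adjacent intervals is hit, which requires controlling the two one-parameter families of lattice points running parallel to the Newton line and to the relevant internal vertex, checking that their $\mu$-images interlace without leaving further gaps, and isolating exactly the exceptions tied to the parity of $p$ and to $r=p-1$. Once this combinatorial core is in place, the ``Moreover'' clause is immediate, since every non-exceptional value was exhibited as the Milnor number of an explicit deformation $f+t\,x^{a}y^{b}$.
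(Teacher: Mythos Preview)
Your plan has the right architecture---reduce to Newton numbers via Kouchnirenko, organize around the two ceilings $\mu(f)$ and $\mu(p,kp)$, and fill in by explicit deformations---but there are two genuine gaps.

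First, restricting to \emph{one-monomial} deformations $f+t\,x^ay^b$ is too rigid. A single added lattice point produces a Newton diagram with at most two segments, whereas the values the paper needs near and below $\mu(p,kp)$ are realized by diagrams with three segments, e.g.\ $\tr(2,p+2\kappa)+\tr(i-2,(i-2)\kappa)+\tr(p-i,\kappa(p-i)-1)$ for $i=2,\dots,p-1$, and similar three-segment chains are used to hit $\nu(p,\kappa p)-(2p-1)$ for $\kappa<k$. It is not at all clear (and the paper does not claim) that every target value can be reached by a two-segment diagram; your proposal silently assumes this. The paper's ``linear'' deformations are linear in the parameter $t$ but add a full nondegenerate $\tilde f$ with several monomials, cf.\ the Proposition in the preliminaries.

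Second, the ``main obstacle'' you name is in fact the entire content of the proof, and you have not supplied it. The paper handles the interval from $\nu(\tr(p,q))-m$ down to $\nu(\tr(p,kp))$ by an induction driven by the auxiliary EEA sequence $(a_j,b_j)$ of Remark~\ref{rk_EEA_sequence}: one repeatedly applies the jump-by-one construction of \cite{MMJW} to a carefully chosen segment of the current diagram, checks via Fact~\ref{rkAllDeformationsAboveGammaN} that the resulting points stay on the convex side, and verifies $\nu(D_{i+1})\ge\nu(E_i)$ so that no gap opens between steps (Lemma~\ref{prop_mp_mq}). Then a second layer of induction over $\tr(p,q-l)$ for $l=0,\dots,r-1$ glues these runs together (Lemma~\ref{lemDluzszePoczatkowe}), and the inequality $(r-l)(p-(r-l))-(p-1)\ge m_{l+1}$ is what guarantees overlap; this is exactly where the parity exception at $r=p-1$ enters via Lemma~\ref{lemSkok2pParzyste}. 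None of this machinery appears in your sketch. Your phrase ``two one-parameter families of lattice points running parallel to the Newton line\dots\ interlace without leaving further gaps'' is not a proof and does not obviously correspond to any true statement. Incidentally, the first attained value below $\mu(f)$ is $\mu(f)-m$ (resp.\ $\mu(f)-(p-1)$), not $\mu(f)-m-1$ (resp.\ $\mu(f)-p$).
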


This article is organised as follows. In Section~\ref{section_preliminaries} we recall some standard definitions, introduce useful notation and remark on Euclid's Algorithm. Section~\ref{section_lemmas} presents some steps toward the proof of Theorem~\ref{thm_Main}. In Section~\ref{section_Main_combinatorial} we present the combinatorial variant of the main Theorem~\ref{thm_Main} and its proof.

As a closing remark, it is important to note that in our main goal of listing integers attained as Milnor numbers of deformations we succeed only as far as non-degenerate deformations go. For degenerate deformations we are unable to present a systematic approach. Some integers missing in the list of Theorem~\ref{thm_Main} can be attained by degenerate deformations, but their occurence still seems irregular and hard to present clearly. In particular, one may check that the clever approach of Brzostowski and Krasiński~\cite{BK} does work for some singularities nicely. All suggestions or comments are welcome.

\section{Preliminaries}\label{section_preliminaries}

\subsection{ Newton numbers and diagrams}
Every singularity $f$ has a Newton diagram which here will mean the finite set of segments that give the boundary of $\supp f+\re^2_+$ (except the two half-lines). Every such chain will be called a Newton diagram without referring to a concrete singularity.

We will say that $\nu(D)$ is the Newton number of the diagram $D$ when it is equal to the Milnor number of a nondegenerate isolated singularity with the diagram~$D$ (see~\cite{Kush} for the classic equivalent combinatorial definition). Note that if both end-points of two diagrams are the same, the difference between the Newton numbers of the diagrams is equal to twice the area between the diagrams. 

For a diagram $D$ we will say that the diagram $\tilde{D}$ is its deformation if $\tilde{D}$ arises as the convex hull of $D\cup P$, where $P$ is some set of points in the non-negative quadrant of the lattice $\Za^2$. Since in such a case $\nu(D)\geq\nu(\tilde{D})$, we will also write $D\geq \tilde{D}$.

For a diagram $D$ we will say that a Newton number is attainable if it is attained by some deformation $\tilde{D}$ of~$D$.

\subsection{Some notation}
Let us introduce a convenient notation for diagrams. If $P=(p,0)$, $Q=(0,q)$ then any translation of the segment $PQ$ will be denoted as  $\tr(p,q)$, in other words
\begin{eqnarray}
\tr(p,q)& := &\text{hypotenuse of a right triangle with base}\nonumber\\
 &  &\text{of length }p\text{ and heigth }q\nonumber
\end{eqnarray}

We will write $n\tr(p,q)$ instead of $\tr(np,nq)$. Moreover, for $\tr(p_1,q_1),\dots,\tr(p_l,q_l)$ denote by 
$$(-1)^s\left(\ \tr(p_1,q_1)+\dots+\tr(p_l,q_l)\ \right)$$
any translation of a polygonal chain with endpoints $Q, Q+(-1)^s[p_1,-q_1], \dots, Q+(-1)^s\left[\sum_{i=1}^l p_i\ ,\ -\sum_{i=1}^l q_i\right].$


\subsection{Relations between singularities and the Newton diagrams}

Consider a semi-quasi-homogeneous singularity $f$ of the form~\eqref{eqPostacfQSH}. In particular, SQH singularity is nondegenerate. Hence, by the classic result~\cite{Kush}, its Milnor number is equal the Newton number of its diagram. Moreover, note that the weights $(1/p,1/q)$ of a SQH singularity $f$ such that the Newton diagram of $f$ is contained in $\tr(p,q)$ with both end-points on the axes are unique.

\begin{prop}
The only non-convenient SQH singularities are in the case $q=kp$ i.e. if $q\nequiv  0({\rm mod}\ p)$ then the diagram of $f$ is of the form $\tr(p,q)$ with end-points on both axes.
\end{prop}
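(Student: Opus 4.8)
The plan is to prove the equivalent contrapositive: if $p$ does not divide $q$, then both corner points $(p,0)$ and $(0,q)$ lie in $\supp f$, and from this alone the Newton diagram of $f$ is forced to be the single segment $\tr(p,q)$ with endpoints on both axes. In particular such an $f$ is then convenient, so a non-convenient SQH singularity can occur only when $q=kp$, which is the assertion.

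The first step is to record which lattice points are available to the initial term. Put $d=\gcd(p,q)$ and write $p=dp'$, $q=dq'$ with $\gcd(p',q')=1$. The equation $q\alpha+p\beta=pq$ of the initial line becomes $q'\alpha+p'\beta=dp'q'$, and coprimality of $p'$ and $q'$ forces $p'\mid\alpha$ and $q'\mid\beta$; hence the integer points on $\tr(p,q)$ (taken with endpoints on the axes) are exactly $(p's,\,q'(d-s))$ for $s=0,\dots,d$, and ${\rm in}f$ is a linear combination of the corresponding monomials. The two extreme ones are $x^p$ (at $s=d$) and $y^q$ (at $s=0$), while every other admissible monomial is divisible by $x^{p'}$ and by $y^{q'}$. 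Moreover the standing hypotheses $2\le p\le q$ together with $p\nmid q$ force $p'\ge 2$ and $q'\ge 2$: indeed $p'=1$ means $p\mid q$, excluded, and $q'=1$ means $q\mid p$, which with $p\le q$ gives $p=q$ and hence again $p\mid q$, excluded.

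The second step uses isolatedness of the initial term. If the coefficient $c_{p,0}$ of $x^p$ in ${\rm in}f$ were zero, then every monomial of ${\rm in}f$ would be divisible by $y^{q'}$ with $q'\ge 2$; writing ${\rm in}f=y^{q'}g$ one sees $\partial_x({\rm in}f)=y^{q'}\partial_x g$ and $\partial_y({\rm in}f)=y^{q'-1}(q'g+y\partial_y g)$ both vanish identically along $\{y=0\}$, contradicting that ${\rm in}f$ is an isolated singularity (part of the definition of SQH). The symmetric argument with $x^{p'}$, $p'\ge 2$, gives $c_{0,q}\ne 0$. Hence $(p,0),(0,q)\in\supp f$. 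Now by the form \eqref{eqPostacfQSH} the whole of $\supp f$ lies in the half-plane $\{q\alpha+p\beta\ge pq\}$, so $\supp f+\re^2_+$ is contained in the convex upward-closed region bounded by the two coordinate half-lines issuing from $(p,0)$ and $(0,q)$ and by the segment $\tr(p,q)$; since it also contains those two points, it equals that region, and its boundary away from the axes is precisely $\tr(p,q)$ with endpoints on both axes. This is the conclusion, and it exhibits $f$ as convenient.

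I do not expect a real obstacle here; the only point needing a touch of care is the elementary fact invoked in the second step — that a weighted homogeneous plane curve singularity divisible by the square of a coordinate is not isolated — which follows at once from the two displayed partial derivatives, or from the remark that an isolated plane curve singularity must be reduced. If one also wants to see that the excluded case $q=kp$ genuinely contains non-convenient singularities, the germ $x^2-xy^k$ (with $p=2$, $q=2k$) works: it is SQH and isolated, yet its support contains no pure power of $y$.
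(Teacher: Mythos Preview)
Your proof is correct and rests on the same key observation as the paper's: the initial term of an SQH singularity must itself be isolated, so it cannot be divisible by the square of a coordinate. The paper's two-sentence proof simply states that $y\mid f$ forces weights $(1,1)$ and $x\mid f$ forces $p\mid q$, leaving the reader to supply exactly the divisibility argument you wrote out; your version is more explicit in parametrising the lattice points on the initial line, checking $p',q'\ge 2$, and then deducing from $(p,0),(0,q)\in\supp f$ that the Newton diagram is the full segment $\tr(p,q)$ rather than merely that $f$ is convenient.
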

\begin{proof}
If $p \leq q$, then the monomial $y$ divides $f$ if and only if the initial term of $f$ is homogeneous (i.e. weights are $(1,1)$). Whereas $x$ divides $f$ if and only if $q({\rm mod}\ p)\equiv 0$. (Indeed, it suffices to remember that the initial term of a SQH singularity has to be an isolated singularity.)
\end{proof}
Note that this means that if $f$ divisible by $y$, then this is the homogeneous non-convenient case and the non-degenerate deformations from~\cite{BKW} apply. The only non-convenient case left is $q=kp$, $k>1$. But the Milnor number of such a singularity is equal to the Milnor number of a convenient SQH singularity with $p,q$ the same as the original one and the deformations of Lemma~\ref{LemPKP} apply.

\begin{prop}
All Newton numbers attained by deformations of Newton diagram of a SQH singularity $f$ are equivalent to Milnor numbers attained by nondegenerate linear deformations of $f$.
\end{prop}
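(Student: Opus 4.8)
The plan is to prove that every Newton number attained by a deformation of the Newton diagram of $f$ is in fact realized as the Milnor number of a nondegenerate \emph{linear} deformation of $f$ — i.e. a deformation of the form $f_t = f + t\,g$ where $g$ is a single monomial (or, more generally, a combination of monomials supported strictly inside the region cut off). The key observation is the dictionary between the two sides: by Kouchnirenko's theorem a nondegenerate singularity has Milnor number equal to the Newton number of its diagram, and conversely, given a diagram $\tilde D \geq D$ obtained as the convex hull of $\supp f \cup P$ for a finite set $P \subset \Za^2_{\geq 0}$, one wants to produce an honest nondegenerate deformation of $f$ whose diagram is $\tilde D$.

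First I would reduce to the case where $P$ is a single lattice point: since the diagram $\tilde D$ is the convex hull of $D$ together with finitely many points, only the vertices of $\tilde D$ that lie strictly below $D$ matter, and each such new vertex is a single lattice point $P_i$ below the old diagram. So it suffices to show that for each new vertex one can switch on a monomial $t_i x^{\alpha_i} y^{\beta_i}$ (with $(\alpha_i,\beta_i) = P_i$) so that the combined deformation $f + \sum_i t_i x^{\alpha_i}y^{\beta_i}$ has Newton diagram exactly $\tilde D$ and is nondegenerate. The genericity of the coefficients $t_i$ guarantees nondegeneracy: nondegeneracy of a singularity with fixed Newton diagram is an open dense condition on the coefficients supported on the diagram, and the coefficients of $f$ already inherited from the SQH form are generic enough on the faces they survive on — here I would lean on the SQH hypothesis, which forces $\mathrm{in}f$ to be an isolated (hence nondegenerate on its own face) weighted homogeneous singularity, so the bottom face is already fine, and the new faces created by the $P_i$ carry the new generic coefficients $t_i$ at a vertex, which is automatically nondegenerate there.

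Next I would observe that "linear" can be arranged: rather than switching on all the $t_i$ simultaneously with a common parameter, one notes that a one-parameter family $f + t\,(\sum_i c_i x^{\alpha_i}y^{\beta_i})$ with generic constants $c_i$ already does the job for $t \neq 0$ small — the Newton diagram of the fiber is $\tilde D$ for all small $t\neq 0$ (the new monomials appear with nonzero coefficient $t c_i$), and nondegeneracy holds for generic $c_i$ by the openness argument above. This is genuinely a linear (indeed affine) deformation of $f$, so the Newton number $\nu(\tilde D) = \mu(f_t)$ is attained by a nondegenerate linear deformation, which is what the proposition claims. The converse inclusion — that every nondegenerate linear deformation has Milnor number equal to some attainable Newton number — is immediate from Kouchnirenko again, since the generic fiber is nondegenerate and its diagram is a deformation of $D$ in the sense defined.

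The main obstacle I anticipate is the nondegeneracy bookkeeping at the faces that are \emph{shared} between $D$ and $\tilde D$ or that are only partially modified: a new vertex $P_i$ may sit on a line through an old vertex, merging faces, and one must check that the surviving coefficients of $f$ together with the new $t_i$ still give a nondegenerate polynomial on each face of $\tilde D$ — in particular on faces that still contain part of the original support. Here the SQH structure is essential and I would argue face-by-face: the unique lowest face is controlled by $\mathrm{in}f$, and any higher face of $\tilde D$ either is a translate/sub-segment of $\tr(p,q)$-type data already nondegenerate, or is a brand-new face having a vertex with a free generic coefficient, which is nondegenerate for trivial reasons. A careful statement of this openness-and-density argument, plus the remark that the finitely many new vertices can be handled with a single generic parameter, is the technical heart; everything else is the translation between $\nu$ and $\mu$ already recorded in the preliminaries.
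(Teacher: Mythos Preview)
Your proposal is correct and follows essentially the same idea as the paper's proof: both arguments rest on the fact that nondegeneracy for a fixed Newton diagram is a Zariski-open condition on the coefficients, so a linear deformation with generic coefficients realises the desired diagram $\tilde D$ as the diagram of a nondegenerate fibre, whence $\mu=\nu(\tilde D)$ by Kouchnirenko.

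The only genuine difference is one of packaging. The paper does not add monomials only at the new vertices $P_i$; instead it takes a generic element $\tilde f$ of the \emph{whole} family of nondegenerate singularities with diagram $\tilde D$ and forms $f+t\tilde f$, invoking genericity in the jet space directly. This sidesteps precisely the obstacle you flag: since every coefficient on every face of $\tilde D$ is perturbed, no face-by-face analysis of ``shared'' or ``partially modified'' faces is needed. Your more constructive version --- perturbing only at the new vertices and then checking each face of $\tilde D$ by hand --- also works (your binomial-face and SQH arguments are sound), and has the advantage of being explicit about which monomials one actually adds; but the paper's one-line appeal to genericity of the full $\tilde f$ is shorter because it absorbs that bookkeeping into the word ``generic''. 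Your remark on the converse inclusion via Kouchnirenko is correct and is left implicit in the paper.
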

\begin{proof}
Let a nondegenerate singularity $f$ have the Newton diagram $D$. Let $\tilde{f}$ be the family of all nondegenerate singularities with diagram $\tilde{D}$. If $\tilde{D}$ is a deformation of $D$, then for a generic choice of coefficients $f+t\tilde{f}$ is a deformation of $f$. (Here generic can mean outside an algebraic set in the jet space $J^q$ of analytic functions in two variables that cuts transversally the Zariski closure of the family $\tilde{f}$).\end{proof}

\subsection{Auxiliary EEA sequence} As in~\cite{MMJW} we will use a sequence arising from Extended Euclid's Algorithm. This sequence lies at the heart of the combinatorial method of constructing deformations with a given Milnor number, especially if the difference compared to $\mu(f)$ is small. 

Recall that for any $a,b$ coprime and $\epsilon=\pm 1$ there exist unique positive integers $a',b'$ such that $a'<a, b'<b$ and $ab'-ba'=\epsilon$. Hence
\begin{rk}\cite[Properties 2.7 and 2.8]{MMJW}\label{rk_EEA_sequence}
For $a_0,b_0$ coprime positive integers, there exists a finite sequence of non-negative integers $(a_j,b_j)_{j=1,\dots,l}$ such that 
$$ a_{j-1}b_{j}-b_{j-1}a_j=(-1)^{l-1-j}, $$
$b_j< b_{j-1}$ for $j=1,\dots,l$  and $a_l=0$, $b_l=1$. Additionally, we may assume ${b_0\over b_1}\geq 2$ and with this condition the sequence is unique.
\end{rk}
This sequence can be easily computed from Extended Euclid's Algorithm. 

For the sequence as above denote ${\rm sign}(a_j,b_j)=(-1)^{l-1-j}$. Note that signs in the sequence alternate and ${\rm sign}(a_{l-1},b_{l-1})=1$.

\section{Lemmas}\label{section_lemmas}

\subsection{Initial jumps for $(p,q)=m<p$}

Let $a_0,b_0$ be coprime and consider the sequence $(a_j,b_j)$ satisfying the conditions of Remark~\ref{rk_EEA_sequence}. We have $a_0=Na_1+na_2$ and $b_0=Nb_1+nb_2$ for some unique positive integers $n,N$.

Let us recall that for such $n$ and $N$ we have
\begin{fact}\cite[Propositions 3.10 and 3.13]{MMJW}
\label{propSigma0DoSigmaN}
Let $a_0,b_0$ be coprime. There are deformations of a diagram $\tr(a_0,b_0)$ that give opening terms of the sequence of minimal jumps of Newton numbers
$$\underbrace{1\ ,\ \dots\ ,\ 1}_{nN}$$
\end{fact}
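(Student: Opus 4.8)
The plan is to produce, for a diagram $\tr(a_0,b_0)$ with $a_0,b_0$ coprime, an explicit chain of $nN$ deformations each lowering the Newton number by exactly $1$, and to do so in two conceptually separate blocks corresponding to the decompositions $a_0=Na_1+na_2$, $b_0=Nb_1+nb_2$ coming from the EEA sequence. Recall that replacing a diagram by its convex hull with one extra lattice point changes the Newton number by twice the lattice area cut off (by the remark on areas in Section~\ref{section_preliminaries}); so ``a jump of $1$'' means we must find a sequence of lattice points, each successive one cutting off exactly half a unit of area beyond the previous diagram. The natural candidates are points lying on, or just inside, the line through $(a_0,0)$ and $(0,b_0)$ at lattice distance governed by the Bézout relation $a_0 b_1 - b_0 a_1 = \pm 1$: the primitive vector $[a_1,-b_1]$ (or $[a_2,-b_2]$) has the property that the parallelogram it spans with the edge direction has minimal area $1$, which is exactly what forces unit jumps.

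First I would set up the ``coarse'' block of $N$ jumps: starting from $\tr(a_0,b_0)=\tr(Na_1+na_2,\,Nb_1+nb_2)$, successively push in lattice points along the direction $[a_1,-b_1]$, obtaining after $j$ steps a diagram of the shape $j\tr(a_1,b_1)+(\text{remainder }\tr(a_0-ja_1,\,b_0-jb_1))$ — valid as long as $a_0-ja_1\ge na_2\ge 0$, i.e. for $j=1,\dots,N$. Because $\gcd(a_1,b_1)=1$ and the Bézout defect between consecutive EEA terms is $\pm1$, each such step cuts a triangle of area exactly $1/2$ off the current diagram, so the Newton number drops by exactly $1$ each time; this accounts for $N$ of the $nN$ ones. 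Then I would iterate the construction on the remaining triangle $\tr(na_2,nb_2)=n\tr(a_2,b_2)$, now pushing in points along $[a_2,-b_2]$: since $a_1 b_2 - b_1 a_2 = \pm 1$ as well, the same minimality argument gives $n$ further unit jumps — but, crucially, these must be interleaved correctly with (or appended to) the first block so that the running diagram stays convex and the points actually remain vertices. Bookkeeping the convexity at the junction of the two slopes $\tr(a_1,b_1)$ and $\tr(a_2,b_2)$ — which is automatic precisely because $b_1/a_1$ and $b_2/a_2$ are ordered the right way by $b_1<b_0$ and the sign alternation in Remark~\ref{rk_EEA_sequence} — is where the EEA sequence properties get used in full.

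The main obstacle I expect is \emph{not} the area computation (that is the routine Bézout bookkeeping) but verifying that the chosen lattice points are genuinely \emph{extremal} for the convex hull at every stage, i.e. that after adding them the intermediate objects are still honest Newton diagrams with the claimed two-segment (then three-segment) shape, and in particular that no earlier-added point becomes redundant when a later one is inserted. This is a monotonicity/ordering statement about the slopes $b_j/a_j$ produced by the algorithm, and it is exactly here that one must invoke $b_j<b_{j-1}$, the alternating signs, and the normalisation $b_0/b_1\ge 2$ from Remark~\ref{rk_EEA_sequence}; once that ordering is in hand, the count $N+n=$ (number of ones in this opening block) together with the area-equals-$1/2$ computation finishes the proof. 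I would organise the write-up as: (i) the single-step area lemma from a Bézout pair; (ii) the ordering lemma for EEA slopes; (iii) assembling the $N$-block and the $n$-block and checking the junction; (iv) concluding that the Newton numbers realised are $\nu(\tr(a_0,b_0))-1,\dots,\nu(\tr(a_0,b_0))-nN$.
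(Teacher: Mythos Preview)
The paper does not prove this statement at all: it is quoted verbatim as a Fact from the earlier article \cite{MMJW} (Propositions~3.10 and~3.13 there), so there is no proof in the present text to compare your plan against.

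That said, your plan has a genuine gap in the second block and in the final count. After your $N$ steps along $[a_1,-b_1]$ the remaining segment is $\tr(na_2,nb_2)$, whose direction \emph{is} $[a_2,-b_2]$; pushing in further lattice points ``along $[a_2,-b_2]$'' therefore lands \emph{on} that segment and cuts off no area whatsoever, so no additional unit jumps are produced this way. Your bookkeeping then concludes with $N+n$ ones, whereas the statement asserts $nN$ ones, and $N+n\neq nN$ in general. In fact the discrepancy dissolves for a different reason: with the normalisation $b_0/b_1\ge 2$ of Remark~\ref{rk_EEA_sequence}, the EEA sequence coincides with the ordinary continued--fraction convergents, so the three--term recurrence $a_0=q_1a_1+a_2$, $b_0=q_1b_1+b_2$ forces $n=1$ and $N=q_1$; hence $nN=N$, your first block already realises all the required jumps, and the second block is simply spurious. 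One further caveat: depending on ${\rm sign}(a_0,b_0)$ the points $(ja_1,\,b_0-jb_1)$ can lie \emph{above} the segment $\tr(a_0,b_0)$ rather than below it, in which case the correct one--point deformations slide in from the other endpoint, namely $(a_0-ja_1,\,jb_1)$; your convexity check should detect and handle this.
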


Moreover, retaining the notation we have

\begin{fact}\cite[Remarks 3.8 and 3.14]{MMJW}
\label{rkAllDeformationsAboveGammaN}
Let $a_0,b_0$ be coprime.

If $a_1\neq 1$, all points giving the deformations of Fact~\ref{propSigma0DoSigmaN} span the diagram
$$\Sigma_{a_0,b_0}:= {\rm sign}(a_0,b_0)\ \big(\ N \tr (a_1,b_1) + n\ \tr\left(a_2,b_2\right)\ \big).$$
If $a_1=1$, all points giving deformations of Fact~\ref{propSigma0DoSigmaN} span the diagram
$$r\tr (1,k+1)+(a_0-r)\tr(1,k),$$
where $b_0=ka_0+r$. 
\end{fact}
Note that in particular it follows that the deformation giving the smallest Newton number (in Fact~\ref{propSigma0DoSigmaN}) has exactly the diagram as in Fact~\ref{rkAllDeformationsAboveGammaN}.

Using the above

\begin{lem}\label{prop_mp_mq}
If $(p,q)=m<p\leq q$, then the opening terms of the sequence of jumps of Newton numbers for a diagram $\tr(p,q)$ are
$$m,\ \underbrace{1\ ,\ \dots\ ,\ 1}_{r(p-r)-m}$$
\end{lem}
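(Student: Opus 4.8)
The plan is to reduce the statement about $\tr(p,q)$ with $(p,q)=m$ to the coprime case already handled by Facts~\ref{propSigma0DoSigmaN} and~\ref{rkAllDeformationsAboveGammaN}. Write $p=mp_0$, $q=mq_0$ with $(p_0,q_0)=1$; then $\tr(p,q)=m\tr(p_0,q_0)$ is the concatenation of $m$ copies of the primitive segment $\tr(p_0,q_0)$. The first jump of size $m$ should come from the single deformation that ``breaks'' the diagram at the first lattice point along the segment, i.e. replacing the outermost copy of $\tr(p_0,q_0)$ by the two legs of the lattice triangle it subtends; the area of that triangle is $m/2$ (since the segment from $(0,q)$ to $(p_0,q-q_0)$ together with the previous breakpoints bounds area $m/2$), so by the area formula for differences of Newton numbers this produces a jump of exactly $m$. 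After this first jump the diagram becomes $\tr(p_0,q_0)+(m-1)\tr(p_0,q_0)$ with one genuine lattice vertex introduced, and from that vertex onward further deformations only shrink the region above, so the remaining available jumps are exactly those for deforming $\tr(p_0,q_0)$ further — and these, by Fact~\ref{propSigma0DoSigmaN} applied to $a_0=p_0$, $b_0=q_0$, are a run of $1$'s.

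The key steps, in order: (i) record that $\tr(p,q)=m\tr(p_0,q_0)$ and that the lattice points on it are exactly $(0,q),(p_0,q-q_0),\dots,(p,0)$; (ii) exhibit the deformation consisting of adding the single point $(p_0,q-q_0)$ (equivalently, the first break point) and check via the ``twice the area'' principle that its Newton number is $\nu(\tr(p,q))-m$, so the first jump is $m$ and no integer strictly between is attainable from $\tr(p,q)$ itself since any deformation must pass a lattice point of $\tr(p,q)$ and the nearest one cuts off area $m/2$; (iii) observe that after this first step the relevant part of the diagram is a single primitive $\tr(p_0,q_0)$ (the rest $(m-1)\tr(p_0,q_0)$ contributes nothing new to the initial jumps), so the ensuing jumps coincide with the opening jumps for $\tr(p_0,q_0)$; (iv) invoke Fact~\ref{propSigma0DoSigmaN} for the pair $p_0,q_0$ to get a run of $nN$ ones, and then identify $nN=r(p-r)-m$. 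For the last identification: with $q=kp+r$ we have $q_0 = kp_0 + r/m$ only when $m\mid r$, so more carefully one computes $n,N$ for the EEA sequence of $(p_0,q_0)$ directly; the quantity $nN$ equals the lattice-point count governing minimal jumps, and a short computation (the same one underlying the $r(p-r)$ formula in the coprime case of the companion paper) gives $nN = \tfrac{1}{m}\bigl(r(p-r)\bigr)\cdot m^{?}$ — I would pin down the exact bookkeeping here rather than guess, matching it to the claimed $r(p-r)-m$.

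\textbf{Main obstacle.} The genuinely delicate point is step (iv): verifying that the length of the run of $1$'s is precisely $r(p-r)-m$ and not merely ``some run of $1$'s.'' In~\cite{MMJW} the coprime analogue gives a run of length tied to $n,N$ via the EEA sequence of $(a_0,b_0)$, and translating that count through the substitution $p=mp_0$, $q=mq_0$, $q=kp+r$ requires care because $r$ need not be divisible by $m$ and because the ``$+1$'' shifts in Remark~\ref{rk_EEA_sequence} interact with the initial jump of size $m$ (which ``uses up'' one unit of area that would otherwise have been part of the coprime run). I expect to resolve this by computing $\nu(\tr(p,q))$ and $\nu$ of the extremal diagram $\Sigma_{p_0,q_0}$ (scaled back up) explicitly via the area/Newton-number formula and subtracting, so that the total drop $m + (\text{run length})$ is forced to equal a closed-form expression whose value is $m + (r(p-r)-m) = r(p-r)$; matching that against the known value of the coprime case closes the argument. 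Everything else — the first jump being exactly $m$, and the structure of deformations after the first break — follows routinely from the ``twice the area between diagrams'' principle and Facts~\ref{propSigma0DoSigmaN}–\ref{rkAllDeformationsAboveGammaN}.
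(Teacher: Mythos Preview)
There is a genuine gap at the very first step. In (ii) you add the point $(p_0,q-q_0)$, but this point lies \emph{on} the segment $\tr(p,q)$ --- it is its first interior lattice point --- so the convex hull is unchanged and the Newton number does not drop at all. Your ``new'' diagram $\tr(p_0,q_0)+(m-1)\tr(p_0,q_0)$ in (iii) is literally $m\tr(p_0,q_0)=\tr(p,q)$ again; nothing has happened. The actual first deformation in the paper adds a point strictly \emph{below} the segment: with $(a_1,b_1)$ the first term of the EEA sequence for $(p_0,q_0)$, one takes the one-point deformation with diagram $\pm\bigl(\tr(p-a_1,q-b_1)+\tr(a_1,b_1)\bigr)$, and the triangle with vertices $(0,q),(p,0),(a_1,q-b_1)$ has area $\tfrac{m}{2}\,|a_1q_0-b_1p_0|=m/2$, giving the jump $m$ (and no lattice point below the segment gives a smaller positive area).

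Even after repairing (ii), your plan in (iii) still fails: after the correct first jump the diagram consists of $\tr(a_1,b_1)$ and the long \emph{coprime} segment $\tr(p-a_1,q-b_1)$, not a single primitive $\tr(p_0,q_0)$ plus a dormant remainder. Deforming only one $\tr(p_0,q_0)$ would produce at most $r_0(p_0-r_0)=r(p-r)/m^2$ ones, far short of the required $r(p-r)-m$. The paper instead applies Fact~\ref{propSigma0DoSigmaN} to $\tr(p-a_1,q-b_1)$ and then \emph{iterates} down the EEA sequence, at stage $i$ deforming a new long coprime segment $\tr(mN_ia_i+a_{i+1},\,mN_ib_i+b_{i+1})$ and using Fact~\ref{rkAllDeformationsAboveGammaN} to check that the resulting diagrams stay convex and that consecutive stages overlap without gaps. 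The iteration terminates at the explicit diagram $r\tr(1,k+1)+(p-r)\tr(1,k)$, whose Newton number equals $\nu(\tr(p,q))-r(p-r)$; subtracting the initial jump $m$ yields exactly $r(p-r)-m$ ones. This is how the count is obtained --- not from a single $nN$ identity for the primitive pair --- which is why the bookkeeping you flag in (iv) could not be made to close.
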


\begin{proof}
For $p,q$ coprime the Proposition is true due to \cite[Thm 1.1]{MMJW}. Assume $p,q$ are not coprime i.e. $m>1$. We have $p=m\tilde{p}, q=m\tilde{q}$. Put $a_0=\tilde{p}$ and $b_0=\tilde{q}$ and consider the unique sequence $(a_j,b_j)_{j=1,\dots,l}$ from Remark~\ref{rk_EEA_sequence}.

Take the one-point deformation with the diagram
\begin{equation}
\label{eqFirstDiag}
{\rm sign}(a_0,b_0) \left(\  \tr(p-a_1,q-b_1)+\tr(a_1,b_1) \ \right).
\end{equation}
This deformation gives jump $m$ and no other deformation gives a greater Newton number.

Since $p-a_1$ and $q-b_1$ are coprime, use Fact~\ref{propSigma0DoSigmaN} for the segment $\tr(p-a_1,q-b_1)$ of the diagram~\eqref{eqFirstDiag}. We can do this because due to Remark~\ref{rkAllDeformationsAboveGammaN} all deformations will lie between diagram~\eqref{eqFirstDiag} and the diagram
$${\rm sign}(a_0,b_0)\ \left(\ \Sigma_{p-a_1,q-b_1}+\tr(a_1,b_1)\ \right)\ =\ {\rm sign}(a_1,b_1)\ \left(\  mN_1\tr(a_1,b_1)+mn_1\tr(a_2,b_2)\ \right),$$
which is convex. Here $n_1,N_1$ are natural numbers such that $b_0=N_1b_1+n_1b_2$. Hence the deformations are indeed deformations of the initial diagram $\tr(p,q)$. Moreover, since the end-points remain fixed, the differences in Newton numbers are preserved.

Inductively, apply Fact~\ref{propSigma0DoSigmaN} to segments $\tr(mN_ia_i+a_{i+1}, mN_ib_i+b_{i+1})$ in the diagrams
$$D_i= -{\rm sign}(a_i,b_i)\ \big(\ \tr(mN_ia_i+a_{i+1}, mN_ib_i+b_{i+1})+(mn_i-1)\tr(a_{i+1},b_{i+1})\ \big),$$
where $N_ib_i+n_ib_{i+1}=b_0$. From Fact~\ref{rkAllDeformationsAboveGammaN} we get that deformations of each $D_i$ lie on or above the diagram 
$$E_i=-{\rm sign}(a_i,b_i)\ \big(\ \Sigma_{mN_ia_i+a_{i+1}, mN_ib_i+b_{i+1}}+(mn_i-1)\tr(a_{i+1},b_{i+1})\ \big),$$
which gives the smallest Newton number. In fact 
$$E_i={\rm sign}(a_i,b_i)\ \big(\ mN_{i+1}\tr(a_{i+1},b_{i+1})+mn_{i+1}\tr(a_{i+2},b_{i+2})\ \big)$$
and $D_{i+1}\geq E_i$. This gives the inequality $\nu(D_{i+1})\geq \nu(E_i)$ between Newton numbers. Therefore between every inductive step there is no gap greater than one.

Note that the last deformation due to Remark~\ref{rk_EEA_sequence} and the second part of Fact~\ref{rkAllDeformationsAboveGammaN} will have the diagram of the form
$$L=r\tr(1,k+1)+(p-r)\tr(1,k).$$
The above is easy to check since end-points remain fixed. Hence in the sequence of Newton numbers of deformations there is a sequence of consecutive numbers from $\nu(\tr(p,q))-m$ to $\nu(L)=\nu(\tr(p,q))-r(p-r)$. This ends the proof.
\end{proof}

It is convenient to underline the fact below.
\begin{rk}
The Newton diagram of the sum of supports of deformations used in above Proposition~\ref{prop_mp_mq} is equal
$$ r\tr(1,k+1)+(p-r)\tr(1,k) $$
where $q=kp+r$. In particular, this is the diagram of the deformation giving the smallest Newton number on the list.
\end{rk}

\subsection{Initial jumps for $mp,mq$ even longer $r(p-1)$}

\begin{lem}\label{lemSkok2pParzyste}
If $q\equiv p-1 ({\rm mod}\ p)$, we have $(p,q-1)\leq 2$ and
$$(p,q-1)>1\iff p {\rm \ is\ even.} $$
\end{lem}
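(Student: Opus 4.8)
The plan is to reduce the statement to the elementary identity $(p,q-1)=(p,2)$ valid under the hypothesis, and then simply read off both assertions.

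First I would rewrite the hypothesis $q\equiv p-1\,({\rm mod}\ p)$ in the notation $q=kp+r$ fixed in the preliminaries: it says precisely that $r=p-1$, i.e. $q=kp+(p-1)=(k+1)p-1$, and hence $q-1=(k+1)p-2$. Next, since $p\mid (k+1)p$, the usual subtraction step for the greatest common divisor gives
$$(p,\,q-1)=\bigl(p,\,(k+1)p-2\bigr)=(p,2).$$
Equivalently, one observes $q-1\equiv -2\,({\rm mod}\ p)$ and takes gcd with $p$.

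Finally I would conclude both parts at once. By the standing convention $2\le p$, the number $(p,2)$ equals $2$ when $p$ is even and $1$ when $p$ is odd; in either case $(p,q-1)=(p,2)\le 2$, and $(p,q-1)>1$ holds if and only if $p$ is even. This is exactly the claim.

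The argument is essentially a one-line computation, so there is no real obstacle; the only point requiring a moment's care — the ``hard part'', such as it is — is translating $q\equiv p-1\,({\rm mod}\ p)$ correctly into $q-1\equiv -2\,({\rm mod}\ p)$ (rather than mishandling the sign), and noting that it is precisely the convention $2\le p$ that excludes the degenerate value $p=1$ and thereby guarantees the bound $(p,q-1)\le 2$.
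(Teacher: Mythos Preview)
Your proof is correct and follows essentially the same approach as the paper: both rewrite $q-1=(k+1)p-2$ and conclude that $(p,q-1)$ divides $2$. Your version is slightly cleaner, using the gcd identity $(p,(k+1)p-2)=(p,2)$ directly rather than writing $p=dp_0$, $q-1=dq_0$ and arguing that $d\mid 2$, but the underlying argument is the same.
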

\begin{proof}
Let $p=dp_0$, $q-1=dq_0$. Obviously $d<p$. We have $q-1=(k+1)p-2$, hence $dq_0=d(k+1)p_0-2$. Therefore, either $d=2$ and $q_0-(k+1)p_0=1$ or $d=1$ and $q_0-(k+1)p_0=2$. The first case can occur if and only if $p$ is even, the second if and only if $p$ is odd (otherwise both $p$ and $q-1$ are even and $d=2$). 
\end{proof}

\begin{lem}\label{lemDluzszePoczatkowe}
Consider $4<p\leq q$. Denote $m=(p,q), q=kp+r$, $0<r<p$. For the Newton diagram with end-points $(0,q), (p,0)$ all Newton numbers between $\nu(\tr(p,q))$ and $\nu(\tr(p,kp))$ are attained except:

numbers between $\nu(\tr(p,q))$ and $\nu(\tr(p,q))-m$ when $p$ is odd or $q\nequiv p-1 ({\rm mod}\ p)$ 

or

the number $\nu(\tr(p,q))-p$ when $p$ is even AND $q\equiv p-1 ({\rm mod}\ p)$.
\end{lem}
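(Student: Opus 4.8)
The plan is to realise the whole segment of Newton numbers $[\nu(\tr(p,kp)),\nu(\tr(p,q))]$ as a union of overlapping blocks, each block being produced by Lemma~\ref{prop_mp_mq} applied not to $\tr(p,q)$ itself but to the diagrams $\tr(p,q-i)$ for $i=0,1,\dots,r-1$. Each $\tr(p,q-i)$ is a deformation of $\tr(p,q)$ (adjoin the lattice point $(0,q-i)$ to the support) and, by transitivity of ``deformation of'', any deformation of $\tr(p,q-i)$ is a deformation of $\tr(p,q)$. One has $\nu(\tr(p,q-i))=\nu(\tr(p,q))-i(p-1)$, and since $q-i=kp+(r-i)$ with $0<r-i<p$ we get $(p,q-i)=(p,r-i)<p$, so Lemma~\ref{prop_mp_mq} applies verbatim and gives that the Newton numbers of deformations of $\tr(p,q-i)$ contain $\nu(\tr(p,q-i))$ together with every integer in
$$\Bigl[\ \nu(\tr(p,q-i))-(r-i)\bigl(p-(r-i)\bigr)\ ,\ \nu(\tr(p,q-i))-(p,r-i)\ \Bigr].$$

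Next I would glue the blocks. The block $i=0$ reaches from $\nu(\tr(p,q))$ down to $\nu(\tr(p,q))-r(p-r)$ and leaves uncovered exactly the $(p,r)-1=m-1$ integers strictly between $\nu(\tr(p,q))$ and $\nu(\tr(p,q))-m$; the block $i=r-1$ is the diagram $\tr(p,kp+1)$, for which $(p,1)=1$, and it bottoms out at $\nu(\tr(p,kp+1))-(p-1)=\nu(\tr(p,kp))$. For $1\le i\le r-1$ the only integers lying above the $i$-th block but outside it are the $(p,r-i)-1$ integers immediately below $\nu(\tr(p,q-i))$; since $\nu(\tr(p,q-i))=\nu(\tr(p,q-(i-1)))-(p-1)$, a short computation shows the $(i-1)$-th block catches all of them provided
$$s\,(p-s)\ \ge\ p+(p,s-1)-2,\qquad s:=r-i+1 .$$
For $2\le s\le p-2$ this is clear, because $s(p-s)\ge 2(p-2)$ while $(p,s-1)\le s-1\le p-3$. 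The value $s=p-1$ is the only possible obstruction; it occurs exactly when $r=p-1$ and $i=1$, and there the inequality reads $p-1\ge p-2+(p,2)$, which holds precisely when $p$ is odd (for $p$ even the $i=1$ block still skips $(p,p-2)-1=1$ integer).

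Hence, when $q\not\equiv p-1\pmod{p}$ — or when $q\equiv p-1$ but $p$ is odd, in which case $m=(p,q)=1$ and the first branch is empty — the blocks cover everything in the range except the $m-1$ integers of the first branch. When $p$ is even and $q\equiv p-1\pmod{p}$ one has $r=p-1$, $m=1$, and the gluing fails only at $i=1$: here $q-1=kp+(p-2)$, so by Lemma~\ref{lemSkok2pParzyste} $(p,q-1)=(p,p-2)=2$, the $i=1$ block skips the single integer $\nu(\tr(p,q-1))-1=\nu(\tr(p,q))-p$, and the $i=0$ block (which only reaches $\nu(\tr(p,q))-(p-1)$) does not recover it, while all blocks $i\ge 2$ glue since there $s\le p-2$. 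Thus $\nu(\tr(p,q))-p$ is the unique uncovered value. This is exactly where Lemma~\ref{lemSkok2pParzyste} is indispensable: it certifies that the defect at $i=1$ has length exactly~$1$, so no further integers are lost.

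It remains to check that the listed exceptions are genuinely unattainable. For the first branch this is immediate from Lemma~\ref{prop_mp_mq} applied to $\tr(p,q)$, whose first jump equals $m$. For $\nu(\tr(p,q))-p$ in the even case, split a hypothetical deformation $\tilde D$ by its intercepts: if its $x$-intercept is $<p$ then $\nu(\tilde D)\le\nu(\tr(p-1,q))=\nu(\tr(p,q))-(q-1)<\nu(\tr(p,q))-p$; if its $y$-intercept is $<q$ then $\tilde D$ is a deformation of $\tr(p,q-1)$, so $\nu(\tilde D)\le\nu(\tr(p,q-1))$ and the value $\nu(\tr(p,q))-p=\nu(\tr(p,q-1))-1$ is excluded by Lemma~\ref{lemSkok2pParzyste}; the remaining possibility is that $\tilde D$ keeps both endpoints of $\tr(p,q)$ and cuts off area exactly $p/2$. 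Ruling out this last possibility is the step I expect to be the main obstacle: it forces one to go one step past the ``opening terms'' of Lemma~\ref{prop_mp_mq} and analyse the convex lattice chains just below the primitive segment $\tr(p,q)$ directly. (For Theorem~\ref{thm_Main}, which asserts only ``at most'' these exceptions, this step may be skipped.)
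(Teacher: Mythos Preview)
Your argument is correct and follows the same route as the paper: apply Lemma~\ref{prop_mp_mq} to each $\tr(p,q-i)$ for $i=0,\dots,r-1$ and check that consecutive blocks overlap, with the only possible failure of the gluing occurring at $i=1$ when $r=p-1$ and $p$ is even. The paper's proof differs only cosmetically (it bounds $m_{l+1}\le p-3$ via $m_{l+1}\mid p$ rather than your $(p,s-1)\le s-1$, and it separates the case $r=p-1$ instead of treating it as $s=p-1$), and it does not attempt your final paragraph on genuine unattainability of $\nu(\tr(p,q))-p$; the paper establishes only the attainability direction, consistent with the ``at most'' in Theorem~\ref{thm_Main}.
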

\begin{proof}
If $r=0$, the theorem is trivial. If $r=1$, then $p,q$ are coprime and use Lemma~\ref{prop_mp_mq} to get the claim.

Let us assume $q\nequiv \pm 1$. We will consider deformations of diagrams
$$\tr(p,q),\  \tr(p,q-1),\ \dots\ ,\  \tr(p,q-(r-1))$$
with both end-points on the axes.

For any $l=0,\dots,r-1$ we have 
\begin{equation}\label{eqPomocNewton}
\nu(\tr(p,q-l))=\nu(\tr(p,q)) - l(p-1)
\end{equation}
and $q-l\equiv r-l>0$, hence $p$ does not divide $q-l$. Denote $m_l=(p,q-l)$. From Lemma~\ref{prop_mp_mq} all numbers from $\nu(\tr(p,q-l))$ to $\nu(\tr(p,q-l))  - (r-l)(p-(r-l))$ are attained except for those between $\nu(\tr(p,q-l))$ and $\nu(\tr(p,q-l))-m_l$. 

We will make sure that these sequences give consecutively the sequence of jumps equal $1$, i.e. that the missing Newton numbers in each step $l$ are already covered by Newton numbers in the preceding step.

First, we show that $\nu(p,q-l)-m_l\geq \nu(p,q-(l+1))$. Indeed, since $m_l<p$, we get $-l(p-1)\geq -(l+1)(p-1)+m_l$ and from equality~\eqref{eqPomocNewton} we get the claim.

Now it suffices to show that the first attained number after the gap in step $l+1$ is bigger than the last number attained in step $l$ i.e.
\begin{equation}
\label{eqPomocNewton2}
\nu(\tr(p,q-(l+1)))-m_{l+1} \geq \nu(\tr(p,q-l)) - (r-l)(p-(r-l))
\end{equation}
for $l=0,\dots, r-2$. Indeed, note that $2\leq r-l \leq p-2$ and therefore 
$$\min_{l=0,\dots,r-2}(r-l)(p-(r-l))\geq 2(p-2).$$ 
Since $m_{l+1}<p$ and it divides $p$, we get $p-3\geq m_{l+1}$ for $p\geq 5$. Hence
$$(r-l)(p-(r-l))-(p-1)\geq 2(p-2)-(p-1)=p-3\geq m_{l+1}.$$
Combine with the fact $\nu(\tr(p,q-(l+1)))=\nu(\tr(p,q-l))-(p-1)$ and we get inequality~\eqref{eqPomocNewton2}.

Therefore we get that all numbers between $\nu(\tr(p,q))-m$ and $\nu(\tr(p,q-(r-1)))-(p-1)=\nu(\tr(p,q))-r(p-1)$ are attained which gives the claim of the lemma.

Now assume that $q\equiv p-1$. In particular, $p,q$ are coprime and Lemma~\ref{prop_mp_mq} shows that all numbers from $\nu(\tr(p,q))$ to $\nu(\tr(p,q))-(p-1)$ are attained. For $p,q-1$ we have $q-1\equiv p-2\neq p-1$, hence we can apply the reasoning above. Note that $\nu(\tr(p,q-1))=\nu(\tr(p,q))-(p-1)$ and $(p,q-1)\leq 2$ depending on whether $p$ is even or odd due to Lemma~\ref{lemSkok2pParzyste}, hence all numbers from $\nu(\tr(p,q))$ to $\nu(\tr(p,q))-(p-1)(p-2)-(p-1)$ are attained except for $\nu(\tr(p,q))-p$ if $p$ is even. This ends the proof.
\end{proof}

\section{Main results combinatorially}\label{section_Main_combinatorial}

\begin{thm}
\label{LemPKP}
Consider $p$ and $q=kp$. For any Newton diagram contained in the diagram $\tr(p,kp)$ with end-points $(0,kp), (p,0)$ all numbers are attained except\\
 numbers between $\nu(\tr(p,kp))$ and $\nu(\tr(p,kp))-(p-1)$\\
 and\\
the number $\nu(\tr(p,q))-(2p-1)$ when $p$ is even.
\end{thm}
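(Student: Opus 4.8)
The plan is to reduce the case $q = kp$ to the already-established coprime machinery by peeling off the largest-area ``first jump'' deformation and then iterating. Since $p$ and $kp$ are not coprime (the $\gcd$ is $p$), Lemma~\ref{prop_mp_mq} does not apply directly, but the idea behind its proof does: write $a_0 = 1$, $b_0 = k$ (after dividing $p, q$ by $p$) and run the EEA sequence of Remark~\ref{rk_EEA_sequence}. Because $a_0 = 1$, this is precisely the degenerate branch of Fact~\ref{rkAllDeformationsAboveGammaN}, where the spanning diagram is $r'\tr(1,k'+1) + (a_0 - r')\tr(1,k')$ with $b_0 = k' a_0 + r'$, i.e.\ here simply $\tr(1,k)$ itself. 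So the combinatorial content is thinner than in Lemma~\ref{prop_mp_mq}, and the work is really about the first few jumps near $\nu(\tr(p,kp))$ and about what happens all the way down to the ``bottom'' diagram.

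\textbf{Key steps.} First I would record that the target for the bottom of the sequence is the staircase diagram $p\tr(1,k)$, which is exactly $\tr(p,kp)$ with no area between it and itself — so the real claim is that, starting from $\tr(p,kp)$, every Newton number strictly below $\nu(\tr(p,kp))$ down to $\nu$ of the convex hull obtained by adding the single interior lattice point nearest the hypotenuse is attained, with the two listed exceptions. Second, I would identify the first-jump deformation: adding the lattice point that cuts off the smallest possible triangle from the corner of $\tr(p,kp)$ produces a jump whose size is governed by $p$; one checks directly that the maximal Newton number strictly below $\nu(\tr(p,kp))$ is $\nu(\tr(p,kp)) - p$, which forces the gap ``between $\nu(\tr(p,kp))$ and $\nu(\tr(p,kp)) - (p-1)$''. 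Third, for $p$ even, the parity obstruction of Lemma~\ref{lemSkok2pParzyste} (applied in the guise $q - 1 \equiv p - 1 \pmod p$ being impossible here, but the analogous even-$\gcd$ phenomenon for the second jump) produces the extra missing value $\nu(\tr(p,q)) - (2p-1)$: the second-largest attainable area jump skips over exactly that residue because every deformation diagram strictly below the first one has area differing from it by an even number when $p$ is even. Fourth, below those first two jumps, I would invoke Lemma~\ref{prop_mp_mq} on the relevant coprime sub-segments $\tr(p - a_1, kp - b_1)$ of the first-jump diagram and iterate exactly as in the proof of Lemma~\ref{prop_mp_mq} (Facts~\ref{propSigma0DoSigmaN} and~\ref{rkAllDeformationsAboveGammaN}), checking at each inductive step that the missing numbers of step $l+1$ are covered by the attained numbers of step $l$, so that from $\nu(\tr(p,kp)) - 2p$ (or $-p$ when $p$ is odd) downward the jumps are all equal to $1$.

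\textbf{The main obstacle.} The delicate point is the even-$p$ exception $\nu(\tr(p,q)) - (2p-1)$: I expect most of the effort to go into pinning down the exact area jumps of the first two corner deformations of $\tr(p,kp)$ and proving that no deformation diagram realizes the intermediate area, i.e.\ a parity/lattice-geometry argument showing that all deformation diagrams of $\tr(p,kp)$ lying strictly between the ``first'' and ``second'' corner-cut diagrams have area congruent to a fixed value mod the relevant modulus when $p$ is even — this is the analogue of Lemma~\ref{lemSkok2pParzyste} for the $q = kp$ situation and is not formally stated in the excerpt. A secondary nuisance is bookkeeping: one must confirm that all the intermediate deformations produced really are deformations of the original $\tr(p,kp)$ (convexity of the intermediate diagrams $D_i$, $E_i$) and that endpoints stay on the axes so that differences of Newton numbers equal twice the enclosed areas, exactly as in the closing paragraph of the proof of Lemma~\ref{prop_mp_mq}. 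Once those two points are settled, the rest is the routine ``no gap greater than one between consecutive inductive steps'' verification.
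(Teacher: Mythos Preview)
Your plan has a genuine gap, and it stems from two concrete miscalculations. First, the opening jump for $\tr(p,kp)$ is $p-1$, not $p$: the deformation $\tr(p,kp-1)$ (shifting the $y$-axis endpoint down by one) has Newton number $(p-1)(kp-2)=\nu(\tr(p,kp))-(p-1)$, and this is the maximal value strictly below $\nu(\tr(p,kp))$. Your claim that the first attainable value is $\nu-p$ is simply wrong, and with it the justification you give for the first gap collapses. Second, the EEA reduction you propose with $a_0=1$, $b_0=k$ is degenerate: once $a_0=1$ there is no nontrivial sequence $(a_j,b_j)$ to run, so there are no ``coprime sub-segments $\tr(p-a_1,kp-b_1)$'' to feed into Lemma~\ref{prop_mp_mq}. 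The machinery of that lemma requires $m<p$ and does not adapt to $m=p$ in the way you suggest.

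The paper's route is different and you are missing its two main ingredients. After the first jump to $\tr(p,\kappa p-1)$, one has $\kappa p-1\equiv p-1\pmod p$, so Lemma~\ref{lemDluzszePoczatkowe} applies directly (this is exactly where Lemma~\ref{lemSkok2pParzyste} enters, not ``in the guise of being impossible'' as you write, but literally): it yields all values from $\nu(\tr(p,\kappa p))-(p-1)$ down to $\nu(\tr(p,(\kappa-1)p))$, missing only $\nu(\tr(p,\kappa p))-(2p-1)$ when $p$ is even. This alone would leave, for every $\kappa<k$, the block between $\nu(\tr(p,\kappa p))$ and $\nu(\tr(p,\kappa p))-(p-1)$ unaccounted for, as well as the value $\nu(\tr(p,\kappa p))-(2p-1)$. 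These are filled by \emph{explicit ad hoc deformations} of $\tr(p,kp)$, namely diagrams of the shape $\tr(2,p+2\kappa)+\tr(i-2,(i-2)\kappa)+\tr(p-i,\kappa(p-i)-1)$ for $i=2,\dots,p-1$ (and a couple of further explicit ones for the isolated missing value and for $\kappa=1$). Nothing in your outline produces these deformations, and without them the induction on $\kappa$ does not close. The small cases $p\le 4$ also require separate explicit treatment.
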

\begin{proof} Assume $p>4$, $k>1$ and that the diagram is equal $\tr(p,kp)$ with end-points on both axes.

For any $\kappa\in\Na$ first jump for $\tr(p,\kappa p)$ is $p-1$ attained by deformation $\tr(p,\kappa p-1)$. Since $\kappa p-1\equiv p-1 ({\rm mod}\ p)$, we use Lemma~\ref{lemDluzszePoczatkowe} and get all numbers from $\nu(\tr(p,\kappa p-1))=\nu(\tr(p,\kappa p))-(p-1)$ to $\nu(\tr(p,\kappa p-1))-(p-1)^2=\nu(\tr(p,\kappa p))-p(p-1)$ except $\nu(\tr(p,\kappa p-1))-p=\nu(\tr(p,\kappa p))-(2p-1)$ when $p$ even. Note that $\nu(\tr(p,\kappa p))-p(p-1)=\nu(\tr(p,(\kappa -1)p))$ for $\kappa \geq 2$.

Consider diagrams
\begin{equation}\label{eq_deform_p-1}
\tr(2,p+2\kappa )+\tr(i-2,(i-2)\kappa )+\tr(p-i,\kappa (p-i)-1), 
\end{equation}
for $i=2,\ldots,p-1$ with end-points $(p,0)$ and $(0,(\kappa +1)p-1)$. For $\kappa <k$ they are deformations of $\tr(p,kp)$. 

For $\kappa \geq 2$ the above deformations~\eqref{eq_deform_p-1} give all numbers between $\nu(\tr(p,\kappa p))$ and $\nu(\tr(p,\kappa p))-(p-1)$. If $\kappa =1$ 
then above deformations give all numbers between $\nu(\tr(p,p))$ and $\nu(\tr(p,p))-(p-1)+1=\nu(\tr(p,p-1))+2$.
The number $\nu(\tr(p,p-1))+1$ is attained by the deformation 
$\tr(2,p+6)+\tr(p-2,p-4)$ of $\tr(p,kp)$.

Moreover, one can check that the deformation $\nu(\tr(2,2\kappa+3))+\tr(p-2,(p-2)\kappa -2 )$ gives the number $\nu(p,\kappa p)-(2p-1)$ for $\kappa <k$. 

Hence by induction for any $k\geq 2$ we get that all numbers from $\nu(\tr(p,kp))$ to $1$ are attained except for:\\
numbers between $\nu(\tr(p,kp))$ and $\nu(\tr(p,kp))-(p-1)$ \\
AND\\
$\nu(\tr(p,kp))-(2p-1)$ if $p$ is an even number.

For $p=1$ the germ $f$ is smooth. Now note that for $p=2,3,4$ the claim holds, see Lemma~\ref{rk_dla_234} below. To conclude the proof, note that the Newton number of a Newton diagram contained in $\tr(p,kp)$ with end-points on both axes has the same Newton number as the bigger diagram. Moreover, all deformations in the considerations above are deformations also of such a smaller diagram. Moreover, for $k=1$ the claim holds also, see~\cite[Thm BKW nzdeg]{BKW} and Remark~\ref{dla $p<5$} below. This ends the proof.\end{proof}

\begin{rk}\label{dla $p<5$}
Consider $\tr(p,p)$, $4\geq p\geq 2$. All numbers between $\nu(p,p)-(p-1)$ and $1$ are attained. Indeed, the first non-degenerate jump is equal to $p-1$. One can check by hand that all numbers from $\nu(\tr(p,p-1))=\nu(\tr(p,p))-(p-1)$ to $1$ are attained.
\end{rk}

The following Lemma can be proven to great extent by methods of proof of Theorem~\ref{LemPKP}. Nevertheless, we thought it might be instructive to provide explicit deformations in the case of small $p$.

\begin{lem}\label{rk_dla_234}
Consider a diagram $\tr(p,kp)$ for $p=2,3,4$ and $k\geq 2$. The claim of Theorem~\ref{LemPKP} holds.
\end{lem}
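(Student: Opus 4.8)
The plan is to treat the three small cases $p=2,3,4$ separately, since for these values the inequalities $p\geq 5$ used in Lemma~\ref{lemDluzszePoczatkowe} and in the main argument of Theorem~\ref{LemPKP} fail, so the general induction cannot be quoted verbatim. For each fixed $p\in\{2,3,4\}$ I would first record what Theorem~\ref{LemPKP} predicts: all Newton numbers below $\nu(\tr(p,kp))$ are attained, with the exception of the gap immediately below $\nu(\tr(p,kp))$ of size $p-1$ (i.e.\ the numbers strictly between $\nu(\tr(p,kp))-(p-1)$ and $\nu(\tr(p,kp))$), together with the single value $\nu(\tr(p,kp))-(2p-1)$ when $p$ is even. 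For $p=2$ this means: all numbers up to $\nu(\tr(2,2k))-1$ are attained except $\nu(\tr(2,2k))-1$ itself (the $(p-1)$-gap is empty of interior points but the endpoint behaviour must be checked) and $\nu(\tr(2,2k))-3$. For $p=3$ the only exclusions are the two numbers $\nu(\tr(3,3k))-1,\nu(\tr(3,3k))-2$. For $p=4$ one excludes $\nu(\tr(4,4k))-1,\nu(\tr(4,4k))-2,\nu(\tr(4,4k))-3$ and $\nu(\tr(4,4k))-7$.

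The key step is an induction on $k\geq 2$, paralleling the proof of Theorem~\ref{LemPKP} but with the small-$p$ base cases supplied by hand. For the base case $k=2$ I would exhibit explicit finite families of one- and two-point deformations of $\tr(p,2p)$, described in the $\tr$-notation of the paper, whose Newton numbers sweep out every integer from $\nu(\tr(p,2p))-p(p-1)=\nu(\tr(p,p))$ down through the range controlled by Remark~\ref{dla $p<5$}, which already handles everything from $\nu(\tr(p,p))-(p-1)$ down to $1$; the segment between $\nu(\tr(p,2p))$ and $\nu(\tr(p,p))$ is covered by invoking Lemma~\ref{lemDluzszePoczatkowe} applied to $\tr(p,2p-1)$ together with deformations of the form~\eqref{eq_deform_p-1} specialised to $\kappa=1$ (these are exactly the deformations $\tr(2,p+2)+\tr(i-2,i-2)+\tr(p-i,(p-i)-1)$ and the auxiliary $\tr(2,p+6)+\tr(p-2,p-4)$ already written in the main proof). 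For the inductive step $k\to k+1$ the same diagrams~\eqref{eq_deform_p-1} with $\kappa$ ranging over $1,\dots,k$ bridge each interval $[\nu(\tr(p,(\kappa+1)p)),\nu(\tr(p,\kappa p))]$ with jumps of size one, again with the $p=2,3,4$ instances of Lemma~\ref{lemDluzszePoczatkowe} (coprimality of $p$ and $\kappa p-1$ holds since $\kappa p-1\equiv p-1\pmod p$) plugged in, and the exceptional value $\nu(\tr(p,\kappa p))-(2p-1)$ for even $p$ is produced by the deformation $\tr(2,2\kappa+3)+\tr(p-2,(p-2)\kappa-2)$ exactly as in the general case.

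The main obstacle I expect is the genuinely case-by-case verification that the small-$p$ versions of Lemma~\ref{lemDluzszePoczatkowe} hold with the exceptions as stated: for $p=5$ and above the lemma's proof uses $p-3\geq m_{l+1}$ and $2(p-2)\geq$ various quantities, and for $p=2,3,4$ one must instead check directly (for $p=3$: $r\in\{1,2\}$, so the only relevant diagram past $\tr(p,q)$ is $\tr(3,q-1)$ with $(3,q-1)=1$; for $p=4$: $r\in\{1,2,3\}$, $q\equiv 3$ gives $(4,q-1)=2$, matching the even-$p$ clause) that the chains of deformations from Lemma~\ref{prop_mp_mq} still glue up with unit jumps, the only surviving gap being the one predicted. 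A second, minor obstacle is the bookkeeping at the very bottom of the range and at $k=2$, where Remark~\ref{dla $p<5$} must be quoted in the correct form; but since that remark already asserts all numbers from $\nu(\tr(p,p))-(p-1)$ down to $1$ are attained for $2\leq p\leq 4$, this is purely a matter of checking the overlap $\nu(\tr(p,p))$ is reached by the $\kappa=1$ deformations. Once these verifications are assembled, the statement follows by the same induction as Theorem~\ref{LemPKP}, and I would close by remarking that every deformation used is linear and nondegenerate, consistent with the ``moreover'' clause of Theorem~\ref{thm_Main}.
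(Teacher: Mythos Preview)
Your plan has the right spirit---reduce to finitely many explicit checks for each small $p$---but there are two concrete problems.

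First, you have misread the target. In this paper ``numbers between $\nu$ and $\nu-(p-1)$'' means the integers strictly between, i.e.\ $\nu-1,\dots,\nu-(p-2)$; the value $\nu-(p-1)$ itself is the first one that \emph{is} attained (it is the first jump). So for $p=2$ there is no gap at all from the first clause and in fact every number $1,\dots,\nu(\tr(2,2k))$ is attained (the value $\nu-3$ is \emph{not} missing, the even-$p$ exception being an ``at most''); for $p=3$ only $\nu(\tr(3,3k))-1$ is excluded; for $p=4$ only $\nu(\tr(4,4k))-1$, $\nu(\tr(4,4k))-2$, and $\nu(\tr(4,4k))-7$. Your lists are off by one in each case.

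Second, the deformations you propose to import from the main proof are calibrated for $p\geq 5$ and break down here. The family~\eqref{eq_deform_p-1} has $i$ ranging over $2,\dots,p-1$, so it is empty for $p=2$ and gives only one or two diagrams for $p=3,4$---not enough to fill the intervals you need. The auxiliary diagram $\tr(2,p+6)+\tr(p-2,p-4)$ is degenerate for $p\leq 4$, and $\tr(2,2\kappa+3)+\tr(p-2,(p-2)\kappa-2)$ is meaningless for $p=2$. Likewise, Lemma~\ref{lemDluzszePoczatkowe} genuinely uses $p\geq 5$ (via $p-3\geq m_{l+1}$), and your sketch of how to repair it for $p=3,4$ does not actually produce the needed chain of unit jumps.

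The paper's proof avoids all of this by abandoning the general template entirely and simply writing down, for each $p\in\{2,3,4\}$ and each $\kappa\leq k$, a short explicit list of deformations (different from those in the main proof) whose Newton numbers cover every required value in $[\nu(\tr(p,(\kappa-1)p)),\nu(\tr(p,\kappa p))]$. For $p=2$ the single family $\tr(2,2k-i)$ already does everything; for $p=3$ and $p=4$ one needs a handful of two-segment diagrams such as $i\tr(1,\kappa)+\tr(p-i,(p-i)\kappa-1)$ and their variants. Your proposal would be fine once you replace the borrowed $p\geq 5$ deformations by such explicit lists, but as written it does not furnish them.
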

\begin{proof}
We will show explicitly the deformations needed depending on $p$. Take any positive integer $\kappa \leq k$.

Consider $p=2$. For $\tr(2,2k)$ by deformations $\tr(2,2k-i)$, $i=1,\ldots,2k-2$ we get all numbers from $\nu(\tr(2,2k))$ to $1$.

Consider $p=3$. The first jump for $\tr(3,3k)$ is $2$ attained by deformation $\tr(3,3k-1)$ and
\begin{itemize}

\item deformations $i\tr(1,\kappa )+\tr(3-i,\kappa (3-i) -1 )$, $i=0,1,2$ give numbers from $\nu(\tr(3,3\kappa -1))$ to $\nu(\tr(3,3\kappa -1))-2$ for $\kappa \geq 2$

\item the deformation $\tr(2,2\kappa -1)+\tr(1,\kappa -1)$ gives the number $\nu(\tr(3,3\kappa -1))-3$ for $\kappa \geq 2$

\item the deformation $\tr(3,3\kappa -3)$ gives the number $\nu(\tr(3,3\kappa -1))-4$ for for $\kappa \geq 2$

\item deformations  $\tr(2,2\kappa +1)+\tr(1,\kappa -2)$ for $\kappa \geq 2$ and $\tr(2,4)$ for $\kappa =2$ give the number $\nu(\tr(3,3\kappa -1))-5$
\end{itemize}
 Hence by Remark \ref{dla $p<5$} we get that for $p=3$ all numbers from $\nu(\tr(p,kp))$ to $1$ are attained except for $\nu(\tr(3,3k))-1$.

Consider $p=4$. The first jump for $\tr(4,4k)$ is $3$ attained by deformation $\tr(4,4k-1)$ and

\begin{itemize}
\item deformations $i\tr(1,\kappa )+\tr(4-i,(4-i)\kappa -1 )$  for $i=0,\ldots,3$ give numbers from $\nu(\tr(4,4\kappa -1))$ to $\nu(\tr(4,4\kappa -1))-3$ for $\kappa \geq 2$.

\item deformations $i\tr(1,\kappa )+\tr(3-i,(3-i)\kappa -1 )+\tr(1,\kappa -1)$ for $i=0,1,2$
give numbers $\nu(\tr(4,4\kappa -1))-5$, $\nu(\tr(4,4\kappa -1))-6$, $\nu(\tr(4,4\kappa -1))-7$ for $\kappa \geq 2$.

\item deformations $i\tr(1,\kappa )+\tr(2-i,(2-i)\kappa -1 )+\tr(2,2\kappa -2)$ for $i=0,1$ give numbers $\nu(\tr(4,4\kappa -1))-8$, $\nu(\tr(4,4\kappa -1))-9$ for $\kappa \geq 2$.
Note that $\nu(\tr(4,4(\kappa -1)))=\nu(\tr(4,4\kappa -1))-9$.

\item deformations $\tr(2,2\kappa +3-i)+\tr(2,2\kappa -3)$ for $i=1,2$ give numbers $\nu(\tr(4,4(\kappa -1)))-1$ and $\nu(\tr(4,4(\kappa -1)))-2$ for $\kappa \geq 2$.

\item deformations $\tr(2,2\kappa +1)+\tr(2,2\kappa -4)$ for $\kappa >2$ and $\tr(2,3)$ for $\kappa =2$ give the number  $\nu(\tr(4,4(\kappa -1)))-7$. \\
(Note that the deformation $\tr(3,8)$ gives the number $\nu(\tr(4,4k))-7$ for $k=2$ but for $k>2$ there is no deformation which gives this number.) 
\end{itemize}
Hence by Remark \ref{dla $p<5$} we get that for $p=4$ all numbers from $\nu(\tr(p,kp))$ to $1$ are attained except for  $\nu(\tr(4,4k))-1$, $\nu(\tr(4,4k))-2$ and $\nu(\tr(4,4k))-7$ (the last gap disappears when $k=2$).
\end{proof}

\begin{thm}
Consider $4< p<q$. Denote $q= kp+r$, $0<r<p$ and $m=(p,q)$. All numbers from $\nu(\tr(p,q))$ to $1$ are attained as Newton numbers except for at most:\\
numbers between $\nu(\tr(p,q))$ and $\nu(\tr(p,q))-m$\\
and\\
numbers between $\nu(p,kp)$ and $\nu(\tr(p,kp))-(p-1)$\\
and\\
the number $\nu(p,q)-p$ if $p$ even and $r=p-1$\\
and\\
the number $\nu(\tr(p,kp))-(2p-1)$ if $p$ is even.
\end{thm}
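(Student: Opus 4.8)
The plan is to realise all the asserted Newton numbers by concatenating two families of deformations of $\tr(p,q)$ and to check that, put together, their exceptions are precisely the four listed families. Since $p>4$, Lemma~\ref{lemDluzszePoczatkowe} applies to $\tr(p,q)$ and supplies the ``upper'' family, attaining every integer from $\nu(\tr(p,q))$ down to $\nu(\tr(p,kp))$ with a controlled exception set. Since $q=kp+r$ with $0<r<p$ forces $q>p$ and hence $k\ge 1$, and Theorem~\ref{LemPKP} covers every $k\ge 1$, applying it to the diagram $\tr(p,kp)$ (with end-points $(0,kp)$, $(p,0)$) supplies the ``lower'' family, attaining every integer from $\nu(\tr(p,kp))$ down to $1$, again with a controlled exception set.

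The first point I would settle is that the lower family really consists of deformations of $\tr(p,q)$. This rests on the fact that $\tr(p,kp)$, with end-points $(0,kp)$, $(p,0)$, is the deformation of $\tr(p,q)$ obtained by adjoining the lattice point $(0,kp)$, which lies strictly below $\tr(p,q)$. Since the deformation relation is transitive --- adjoining further lattice points below a deformation of $\tr(p,q)$ again yields a deformation of $\tr(p,q)$ --- every deformation of $\tr(p,kp)$ produced by Theorem~\ref{LemPKP} is a deformation of $\tr(p,q)$; the upper family consists of deformations of $\tr(p,q)$ by the construction in Lemma~\ref{lemDluzszePoczatkowe}.

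It then remains to merge the two exception lists. Lemma~\ref{lemDluzszePoczatkowe} attains every integer in $[\nu(\tr(p,kp)),\nu(\tr(p,q))]$ except the integers between $\nu(\tr(p,q))$ and $\nu(\tr(p,q))-m$ when $p$ is odd or $r\ne p-1$, or else the single integer $\nu(\tr(p,q))-p$ when $p$ is even and $r=p-1$; in this last case $m=(p,q)=(p,kp+p-1)=(p,p-1)=1$, so the two conditional clauses are together precisely the first and the third of the listed exceptional families. Theorem~\ref{LemPKP} applied to $\tr(p,kp)$ attains every integer in $[1,\nu(\tr(p,kp))]$ except the integers between $\nu(\tr(p,kp))$ and $\nu(\tr(p,kp))-(p-1)$ and, for $p$ even, the integer $\nu(\tr(p,kp))-(2p-1)$, which are precisely the second and the fourth listed families. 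The value $\nu(\tr(p,kp))$ lying in both ranges is itself attained (trivially, by the diagram $\tr(p,kp)$), so no new gap appears at the seam; and since $\nu(\tr(p,q))-\nu(\tr(p,kp))=r(p-1)\ge p-1>m$ (as $m$ is a proper divisor of $p$, whence $m\le p/2<p-1$), the upper exceptions lie strictly above $\nu(\tr(p,kp))$ and so cannot be recovered from the lower family, and symmetrically for the lower exceptions. Therefore the two families jointly attain every integer in $[1,\nu(\tr(p,q))]$ outside the four listed families.

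I expect no deep obstacle: the mathematical content is carried by Lemma~\ref{lemDluzszePoczatkowe} and Theorem~\ref{LemPKP}, and the real work is the arithmetic bookkeeping just described --- in particular matching the two-case exception of Lemma~\ref{lemDluzszePoczatkowe} with the split into the first and third families, which hinges on the observation $m=1$ when $p$ is even and $r=p-1$, and verifying that upper and lower exceptions cannot collide at $\nu(\tr(p,kp))$. The boundary cases of the hypothesis require nothing extra: $r=1$ (where $p,q$ are coprime and $m=1$) is already absorbed inside Lemma~\ref{lemDluzszePoczatkowe} via Lemma~\ref{prop_mp_mq}; no small-$p$ analysis is needed because $p>4$; and $k=1$, so $\tr(p,kp)=\tr(p,p)$, is within the scope of Theorem~\ref{LemPKP}.
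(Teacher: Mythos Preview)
Your proposal is correct and follows exactly the paper's approach: the paper's proof is the single line ``Combine Lemma~\ref{lemDluzszePoczatkowe} and Theorem~\ref{LemPKP} to get the claim,'' and your write-up is a careful expansion of precisely that combination, with the arithmetic bookkeeping (notably $m=1$ when $r=p-1$, and the transitivity of deformation through $\tr(p,kp)$) made explicit. Your additional verification that the upper and lower exception sets cannot rescue one another is more than the ``at most'' statement requires, but it is sound and does no harm.
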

\begin{proof}
Combine Lemma~\ref{lemDluzszePoczatkowe} and Theorem~\ref{LemPKP} to get the claim.
\end{proof}

\section*{Aknowledgements} Authors were supported by grant NCN 2013/09/D/ST1/03701.

\bibliographystyle{alpha}
\bibliography{bibliografia}

\begin{thebibliography}{BKW14}

\bibitem[Arn04]{ArnoldProblems}
Vladimir~I. Arnold.
\newblock {\em Arnold's problems}.
\newblock Springer-Verlag, Berlin; PHASIS, Moscow, 2004.
\newblock Translated and revised edition of the 2000 Russian original, With a
  preface by V. Philippov, A. Yakivchik and M. Peters.

\bibitem[BK14]{BK}
Szymon Brzostowski and Tadeusz Krasi{\'n}ski.
\newblock The jump of the {M}ilnor number in the {$X_9$} singularity class.
\newblock {\em Cent. Eur. J. Math.}, 12(3):429--435, 2014.

\bibitem[BKW14]{BKW}
S.~{Brzostowski}, T.~{Krasinski}, and J.~{Walewska}.
\newblock {Milnor numbers in deformations of homogeneous singularities}.
\newblock {\em ArXiv}, 2014.

\bibitem[Bod07]{Bo}
Arnaud Bodin.
\newblock Jump of {M}ilnor numbers.
\newblock {\em Bull. Braz. Math. Soc. (N.S.)}, 38(3):389--396, 2007.

\bibitem[GLS07]{GreuelShustinL}
G.-M. Greuel, C.~Lossen, and E.~Shustin.
\newblock {\em Introduction to singularities and deformations}.
\newblock Springer Monographs in Mathematics. Springer, Berlin, 2007.

\bibitem[GZ93]{GuseinZade}
S.~M. Guse{\u\i}n-Zade.
\newblock On singularities that admit splitting off {$A_1$}.
\newblock {\em Funktsional. Anal. i Prilozhen.}, 27(1):68--71, 1993.

\bibitem[Kou76]{Kush}
A.~G. Kouchnirenko.
\newblock Poly\`edres de {N}ewton et nombres de {M}ilnor.
\newblock {\em Invent. Math.}, 32(1):1--31, 1976.

\bibitem[MW14]{MMJW}
M.~{Michalska} and J.~{Walewska}.
\newblock {Milnor numbers of deformations of semi-quasi-homogeneous plane curve
  singularities}.
\newblock {\em ArXiv e-prints}, 2014.

\bibitem[P{\l}o14]{Pl14}
Arkadiusz P{\l}oski.
\newblock A bound for the {M}ilnor number of plane curve singularities.
\newblock {\em Cent. Eur. J. Math.}, 12(5):688--693, 2014.

\end{thebibliography}

\end{document}